\newcommand{\Aut}{\mathop{\mathrm{Aut}}\nolimits}
\newcommand{\Sym}{\mathop{\mathrm{Sym}}\nolimits}
\newcommand{\ind}{\mathop{\mathrm{ind}}\nolimits}
\newtheorem{theorem}{Theorem}[section]
\newtheorem{prop}[theorem]{Proposition}
\newtheorem{cor}[theorem]{Corollary}
\newenvironment{proof}{\prepf\rm}{\endprepf}
\def\wwwatlas{{\sc www-Atlas}\ }
\def\wwwatlasn{{\sc www-Atlas}}
\def\GAP{{\sf GAP}}
\def\xsp#1#2{\mathord{2^{^{_{\,1+#2\!}}}_{^{#1}}}}
\def\mat#1{\mathord{\mathrm{M}_{#1}}}
\def\jan#1{\mathord{\mathrm{J}_{#1}}}
\def\udot{\mathord{{}^{\textstyle{{\cdot}}}{}}}
\def\cn{\mathord{{:}}}
\def\cen{\mathord{\mathrm{C}}}
\def\AAA{\mathord{\mathrm{A}}}
\def\BB{\mathord{\mathrm{B}}}
\def\BBF{\mathord{\mathbb{F}}}
\let\le\leqslant
\let\ge\geqslant
\begin{document}
\title{The Hall--Paige conjecture, and synchronization for affine and diagonal
groups}
\author{John N. Bray\thanks{School of Mathematical Sciences, Queen Mary
University of London, Mile End Road, London E1 4NS, UK} ,
Qi Cai\thanks{No 768, Juxian Street, Chenggong Campus, Yunnan Normal University,
Kunming, Yunnan Province, 651010, China} ,
Peter J. Cameron${}^*$\thanks{School of Mathematics and Statistics, University
of St Andrews, North Haugh, St Andrews, Fife  KY16 9SS, UK} , \\
Pablo Spiga\thanks{Dipartimento di Matematica e Applicazioni, University of
Milano-Bicocca, Milano, 20125 Via Cozzi 55, Italy} , and
Hua Zhang${}^\dag$}
\date{Dedicated to the memory of Charles Sims}
\maketitle

\begin{abstract}
The Hall--Paige conjecture asserts that a finite group has a complete mapping
if and only if its Sylow subgroups are not cyclic. The conjecture is now proved,
and one aim of this paper is to document the final step in the proof (for the
sporadic simple group $\jan{4}$).

We apply this result to prove that primitive permutation groups of simple
diagonal type with three or more simple factors in the socle are
non-synchronizing. We also give the simpler proof that, for groups of affine
type, or simple diagonal type with two socle factors, synchronization and
separation are equivalent.

Synchronization and separation are conditions on permutation groups which are
stronger than primitivity but weaker than $2$-homogeneity, the second of these
being stronger than the first. Empirically it has been found that groups which
are synchronizing but not separating are rather rare. It follows from our
results that such groups must be primitive of almost simple type.

\smallskip
\noindent{\em Keywords: }Automata, complete mappings, graphs, 
Hall--Paige conjecture, orbitals, primitive groups, separating groups,
synchronizing groups, transformation semigroups

\smallskip

\noindent{\em MSC classification: }Primary 20B15; secondary 05E30, 20M35

\end{abstract}

\section{Introduction}

In this section, we recall the definition of synchronization and separation for
permutation groups, and the O'Nan--Scott theorem in the form we require, and
state two theorems which imply that groups which are synchronizing but not
separating must be almost simple. The proof in the case of diagonal groups
requires the truth of the \textit{Hall--Paige conjecture}; the second section
describes this conjecture, and the computations required to prove the final
case needed to resolve it. The final section gives the analysis of diagonal
groups and applies the Hall--Paige conjecture to show that primitive groups
of simple diagonal type with at least three socle factors are non-synchronizing,
and also the (simpler) proof that synchronization and separation are equivalent
for groups of affine type, and of simple diagonal type with two socle factors.

The concept of synchronization arose in automata theory; we state it here for
transformation monoids. A transformation monoid $M$ on a finite set $\Omega$
is \emph{synchronizing} if it contains a transformation of rank~$1$ (one
whose image is a single point).

A permutation group cannot be synchronizing in this sense unless $|\Omega|=1$;
so by abuse of language we redefine the term, and say that the permutation
group $G$ is \emph{synchronizing} if, for every transformation $t$ of $\Omega$
which is not a permutation, the monoid $M=\langle G,t\rangle$ is synchronizing
in the preceding sense.

The definition can be re-phrased in a couple of ways, the first in traditional
permutation group language, the second in terms of graphs. The \emph{clique
number} of a graph is the size of the largest complete subgraph; and the
\emph{chromatic number} is the smallest number of colours required to colour
the vertices so that adjacent vertices are given different colours (this is
called a \emph{proper colouring} of the graph). Since the vertices of a
complete subgraph must all have different colours in a proper colouring, we
see that the clique number is not greater than the chromatic number.

\begin{theorem}
Let $G$ be a permutation group on $\Omega$.
\begin{itemize}
\item[(a)] $G$ is non-synchronizing if and only if there is a non-trivial
partition $P$ of $\Omega$ and a subset $A$ of $\Omega$ such that, for all
$g\in G$, $Ag$ is a transversal for $P$. (We will say that the pair $(A,P)$
\emph{witnesses} non-synchronization.)
\item[(b)] $G$ is non-synchronizing if and only if there is a graph $\Gamma$
on the vertex set $\Omega$, not complete or null, with clique number equal
to chromatic number, such that $G\le\Aut(\Gamma)$.
\end{itemize}
\end{theorem}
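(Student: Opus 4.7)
The plan is to pivot both equivalences on a pair $(A, P)$ with $A \subseteq \Omega$, $P$ a non-trivial partition of $\Omega$, $|A| = |P| \geq 2$, and $Ag$ a transversal for $P$ for every $g \in G$. In (a) such a pair will be the image and kernel partition of a minimum-rank element of the monoid; in (b) it will be a maximum clique together with a minimum proper colouring of the graph. Establishing (a) first and then moving between its witness and a suitable graph keeps the argument linear.

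For (a), the key computation is this: if $t$ has image $A$ and kernel partition $P$, then for any $g \in G$ the rank of $tgt$ equals the number of parts of $P$ that meet $Ag$; so this rank equals $|P|$ precisely when $Ag$ is a transversal for $P$, and is strictly smaller otherwise. For the forward direction I would pick $t$ of minimum rank in $\langle G, t\rangle$ (necessarily at least $2$, since the monoid is not synchronizing); minimality together with the computation above forces $Ag$ to be a transversal for $P$ for every $g \in G$. For the converse, given a witnessing $(A, P)$, I would fix any transformation $t$ with image $A$ and kernel partition $P$, and then use induction on the number of occurrences of $t$ in a word of $\langle G, t\rangle$ to show that every non-permutation in the monoid has rank exactly $|A| \geq 2$, ruling out rank $1$.

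For (b) I would translate between (a) and (b) directly. Given a witnessing pair $(A, P)$, define $\Gamma$ on $\Omega$ by declaring $x \sim y$ iff $\{x, y\} \subseteq Ah$ for some $h \in G$. This is manifestly $G$-invariant; $A$ itself is a clique of size $|A|$; and each part of $P$ is independent, because every translate $Ah$ is a transversal for $P$ and so contains no two points from a single $P$-part. Thus $|A| \leq \omega(\Gamma) \leq \chi(\Gamma) \leq |P| = |A|$, giving equality; the graph is not null since $|A| \geq 2$, and not complete since any $P$-part of size at least $2$ yields a non-edge. Conversely, given a $G$-invariant $\Gamma$ (not complete or null) with $\omega(\Gamma) = \chi(\Gamma) = k$, pick any maximum clique $A$ and any proper $k$-colouring $P$; since a $k$-clique in a $k$-colouring must hit each colour class exactly once, $Ag$ is a transversal for $P$ for every $g \in G \leq \Aut(\Gamma)$, recovering the witness of (a).

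The proof is essentially elementary once these two ingredients — the rank formula for $tgt$ and the rainbow property of maximum-size cliques in minimum colourings — are in hand, so I do not anticipate a serious obstacle. The one subtlety worth flagging is that in the converse direction of (b) the colouring $P$ need not be $G$-invariant; this is harmless, because the transversal property of $Ag$ follows purely from $Ag$ being a $k$-clique in a graph with chromatic number $k$.
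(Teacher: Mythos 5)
Your proposal is correct. The paper itself does not prove this theorem (it is quoted as background, with the reader referred to the survey \cite{monster}), and your argument is essentially the standard one found there: the rank computation for $sgs$ applied to a minimum-rank element, the induction on occurrences of $t$ showing every non-permutation word has image a translate of $A$, and the clique/colouring translation between the two witnesses. The only point you leave implicit in the converse of (b) -- that the colour-class partition $P$ is non-trivial -- is immediate, since $\Gamma$ not null gives $\chi(\Gamma)\ge 2$ and $\Gamma$ not complete gives $\chi(\Gamma)<|\Omega|$, so some colour class has size at least $2$.
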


We note that a synchronizing group must be primitive, since if there is a
fixed non-trivial partition $P$, then $P$ and any transversal $A$ witness 
non-synchronization. Similarly, a $2$-homogeneous group is synchronizing,
since it is not contained in the automorphism group of a non-trivial graph.

The related concept of separation has no connection with automata, but has
proved very useful in studying synchronization. We note first that a simple
counting argument shows that, if $A$ and $B$ are subsets of $\Omega$, and
$G$ is a transitive permutation group on $\Omega$ such that $|Ag\cap B|\le1$
for all $g\in G$, then $|A|\cdot|B|\le|\Omega|$. We say that $G$ is
\emph{non-separating} if there exist sets $A$ and $B$, with $|Ag\cap B|=1$
for all $g\in G$, and $|A|\cdot|B|=|\Omega|$; we say that the pair $(A,B)$  witnesses non-separation. We say that $G$ is \emph{separating} otherwise. 

There is an analogue of the second part of the above result. The 
\emph{independence number} of a graph is the size of the largest induced
null graph (the maximum number of pairwise non-adjacent vertices).

\begin{theorem}\label{thrm:nonsep}
The transitive permutation group $G$ on $\Omega$ is non-separating if and only
if there is a graph $\Gamma$ on the vertex set $\Omega$, not complete or null,
such that the product of its clique number and independence number is equal
to $|\Omega|$, and $G\le\Aut(\Gamma)$.
\end{theorem}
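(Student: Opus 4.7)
The plan is to adapt the graph-theoretic argument of Theorem~1.1(b), passing between a witnessing pair $(A,B)$ and a suitable $G$-invariant graph $\Gamma$ by means of double counting. The key observation, used in both directions, is the following clique-coclique inequality for graphs with a transitive group of automorphisms: if $\Gamma$ is any graph on $\Omega$, $G\le\Aut(\Gamma)$, $C$ is a clique of $\Gamma$ and $I$ is an independent set of $\Gamma$, then $|Cg\cap I|\le 1$ for every $g\in G$, while transitivity of $G$ on $\Omega$ forces
\[
\sum_{g\in G}|Cg\cap I| \;=\; \sum_{w\in C,\,v\in I}|\{g\in G:wg=v\}| \;=\; \frac{|C|\cdot|I|\cdot|G|}{|\Omega|}.
\]
Hence $|C|\cdot|I|\le|\Omega|$, with equality if and only if $|Cg\cap I|=1$ for every $g\in G$.

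For the forward direction, given non-trivial witnesses $(A,B)$, I would define $\Gamma$ on $\Omega$ by joining $x,y$ exactly when $\{x,y\}\subseteq Ag$ for some $g\in G$. Then $G\le\Aut(\Gamma)$ by construction, and each translate $Ag$ is a clique, so the clique number of $\Gamma$ is at least $|A|$. Also $B$ is an independent set, for otherwise some $Ag$ would meet $B$ in at least two points, contradicting $|Ag\cap B|=1$; so the independence number of $\Gamma$ is at least $|B|$. Thus the product of these two numbers is at least $|A|\cdot|B|=|\Omega|$, while the preliminary observation supplies the reverse inequality, giving equality. Non-triviality of $\Gamma$ follows from $|A|,|B|\ge 2$: the former ensures $\Gamma$ is not null, the latter that $\Gamma$ is not complete.

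For the reverse direction, given $\Gamma$ as in the statement, take $A$ to be a maximum clique and $B$ a maximum independent set of $\Gamma$. The hypothesis $\omega(\Gamma)\alpha(\Gamma)=|\Omega|$ is precisely the equality case of the preliminary observation, so $|Ag\cap B|=1$ for every $g\in G$, exhibiting $(A,B)$ as a witness to non-separation. The only real step is the preliminary clique-coclique inequality, but since it is a single line of double counting, I do not expect any serious obstacle; the one technical care point is to insist on $|A|,|B|\ge 2$ in the definition of non-separation, otherwise every transitive group admits the trivial witness $A=\{x\},\ B=\Omega$.
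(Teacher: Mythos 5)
Your proposal is correct. The paper in fact states this theorem without giving a proof (it is background material), but your key lemma is precisely the ``simple counting argument'' recorded in the paper immediately before the statement ($|Ag\cap B|\le 1$ for all $g\in G$ and $G$ transitive imply $|A|\cdot|B|\le|\Omega|$), and your two directions --- building $\Gamma$ from the translates $Ag$ so that each $Ag$ is a clique and $B$ a coclique, and conversely extracting a maximum clique and maximum independent set and invoking the equality case of the clique--coclique bound --- constitute the standard argument this statement is meant to summarise. The only detail worth making explicit in the reverse direction is that ``not null'' forces the clique number to be at least $2$ and ``not complete'' forces the independence number to be at least $2$, so the resulting pair $(A,B)$ is a genuine (non-trivial) witness, exactly the care point you already flagged.
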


If $G$ is transitive and $(A,P)$ witnesses non-synchronization, then $(A,B)$
witnesses non-separation for any part $B$ of $P$. (For, by the result cited
before Theorem~\ref{thrm:nonsep}, if $B$ is the largest part of $P$, then
\[|\Omega|\ge|A|\cdot|B|=|P|\cdot|B|\ge|\Omega|;\]
thus equality holds, which implies that all parts have the same size and
$|A|\cdot|B|=|\Omega|$ for any part.)
Thus, separation implies
synchronization. We are interested in the converse. Apart from four sporadic
examples (namely the symmetric and alternating groups of degree $10$ acting on
$4$-subsets, see~\cite[Section~5]{BC}, and $G_2(2)$ and its subgroup of index
$2$ with degree $63$), only one infinite family of primitive groups are known to
be synchronizing but not separating: these are the five-dimensional orthogonal
groups over finite fields of odd prime order, acting on the corresponding
quadrics; the proof of synchronization uses a result of Ball, Govaerts and
Storme~\cite{bgs} on ovoids on these quadrics. See~\cite[Section~6.2]{monster}.

It is easier to test for separation than for synchronization, since clique
number is easier to find in practice than chromatic number. Our main result
shows that this easier test suffices for synchronization except in the case
of almost simple groups.

For further information on these concepts we refer to the paper~\cite{monster}.

Primitive permutation groups are described by the O'Nan--Scott theorem, for
which we refer to Dixon and Mortimer~\cite{dm}. We need only a weak form of
the theorem:

\begin{theorem}
Let $G$ be a primitive permutation group on $\Omega$. Then one of the following
occurs:
\begin{itemize}
\item[(a)] $G$ is contained in a wreath product $H\wr K$ with product action,
and preserves a Cartesian decomposition of $\Omega$;
\item[(b)] $G$ is of affine, simple diagonal or almost simple type.
\end{itemize}
\end{theorem}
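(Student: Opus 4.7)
The plan is to deduce the stated result from the full O'Nan--Scott classification given in Dixon and Mortimer~\cite{dm}. That classification partitions primitive permutation groups $G$ on $\Omega$ into six types, keyed to the structure of the socle $N$ of $G$: affine (HA), almost simple (AS), simple diagonal (SD), product action (PA), compound diagonal (CD), and twisted wreath (TW). Three of these types---HA, SD, and AS---appear verbatim in~(b), so the task reduces to showing that each of the remaining types PA, CD, TW satisfies~(a).

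For PA the conclusion is essentially the definition: $G$ lies inside $H \wr K$ in product action on $\Delta^k$, and this product structure is the required Cartesian decomposition. For TW the socle $N = T^k$ (with $T$ nonabelian simple) acts regularly on $\Omega$, so $\Omega$ may be identified with $T^k$; the group $G$ then embeds in $\Sym(T) \wr \Sym(k)$ acting by the product action, and the Cartesian decomposition $T^k$ is preserved.

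The case needing more care is CD. Here $N = T^k$ with $k = \ell m$ and $\ell, m \ge 2$, and the point stabilizer $N_\alpha$ is (up to automorphism) a direct product of $\ell$ full diagonal subgroups of $T^m$, one for each block of a partition of the $k$ simple factors into $\ell$ classes of size $m$. Using this structural description of $N_\alpha$ I would identify $\Omega$ with $\Delta^\ell$, where $\Delta$ is the coset space carrying the SD action of $T^m$. Because the partition of the factors is intrinsic to the structure of $N_\alpha$, the group $G$ (which normalises $N$) must permute its blocks; this gives $G \le \Sym(\Delta) \wr \Sym(\ell)$ in product action and the Cartesian decomposition required by~(a).

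I do not expect any serious obstacle, since the hard work is already contained in~\cite{dm}. The only subtlety is that conventions for SD sometimes subdivide it further according to whether the induced action on the $k$ simple factors is primitive; this is irrelevant here because (a) and (b) are not required to be disjoint, so an SD group with imprimitive factor action could legitimately be placed in either alternative.
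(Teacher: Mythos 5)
Your proposal is correct, and it matches the paper's treatment: the paper states this weak form of the O'Nan--Scott theorem without proof, simply citing Dixon and Mortimer~\cite{dm} (and \cite[Chapter~8]{ps} for Cartesian decompositions), which is exactly the reduction you carry out by placing the product-action, compound-diagonal and twisted-wreath types into case~(a) via their preserved Cartesian decompositions. Your handling of the remaining types (PA by definition, TW via $G\le\mathrm{Hol}(T)\wr\Sym(k)\le\Sym(T)\wr\Sym(k)$ on $T^k$, CD via $\Omega\cong\Delta^{\ell}$) is the standard and correct argument, so no gap needs addressing.
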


The Cartesian decompositions in Case~(a) are defined and studied in detail in
\cite[Chapter 8]{ps}. The simplest description for our purpose is that $G$ is
contained in the automorphism group of a \emph{Hamming graph} $H(n,q)$, whose
vertices are the $n$-tuples over an alphabet $A$ of size $q$, two vertices
adjacent if they agree in all but one coordinate. The clique number of the graph
is $q$: the set of $n$-tuples with fixed values in the first $n-1$ coordinates
is a clique. Moreover, if $A$ is an abelian group, then colouring an $n$-tuple
with the sum of its elements gives a proper $q$-colouring. So groups in
Case~(a) are not synchronizing.

In Case~(b), groups of affine type consist of mappings of the form
$x\mapsto xA+b$ on a vector space over a finite field of prime cardinality, 
where $A$ is an invertible linear map and $b$ a fixed vector; the socle of
such a group is the
\emph{translation subgroup} $\{x\mapsto x+b\}$. Groups of diagonal type are
described in more detail in the next section. Finally, $G$ is \emph{almost
simple} if $T\le G\le\Aut(T)$ for some non-abelian simple group $T$ (the
action of $T$ is not specified in this case).

\smallskip

Now we can state the main result.

\begin{theorem}
Let $G$ be a primitive permutation group which is not almost simple. Then
$G$ is synchronizing if and only if it is separating.
\end{theorem}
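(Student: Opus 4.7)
The plan is to apply the weak O'Nan--Scott dichotomy stated above and handle the three remaining families (product action, affine, simple diagonal) separately. Only the implication ``synchronizing $\Rightarrow$ separating'' requires work, since the counting argument preceding Theorem~\ref{thrm:nonsep} already gives ``separating $\Rightarrow$ synchronizing'' for any transitive group. Equivalently, the aim is to show that every non-separating primitive group in these three families is non-synchronizing.

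Case~(a) is essentially free. We already noted that the Hamming graph $H(n,q)$ has clique number and chromatic number both equal to $q$, so $G$ is non-synchronizing. Placing any abelian group structure on the $q$-letter alphabet, the set of $n$-tuples whose coordinates sum to~$0$ is an independent set of size $q^{n-1}$ (two such tuples differ in at least two coordinates), so $\omega(H(n,q))\cdot\alpha(H(n,q))=q^n=|\Omega|$; by Theorem~\ref{thrm:nonsep} the same graph also witnesses non-separation. Both properties fail, and the equivalence holds trivially.

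For the affine case the socle $V$ is elementary abelian and regular, so any $G$-invariant graph $\Gamma$ is a Cayley graph on~$V$. Assuming non-separation, pick $\Gamma$ with $\omega(\Gamma)\cdot\alpha(\Gamma)=|V|$; the plan is to exploit the abelian regular action to tile $V$ by translates of a maximum independent set, producing $\omega(\Gamma)$ disjoint independent classes of size $\alpha(\Gamma)$. This proper $\omega(\Gamma)$-colouring yields the partition/transversal pair witnessing non-synchronization. The simple diagonal case with two socle factors admits an analogous argument: the socle $T\times T$ contains copies of $T$ acting regularly (by left- or right-multiplication on~$T$), so $\Gamma$ is a Cayley graph on the non-abelian group~$T$, and the tiling argument is adapted to that setting. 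These are the ``simpler'' equivalences promised in the introduction.

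The hard case, and the main obstacle, is simple diagonal type with $n\ge 3$ simple factors in the socle. Here the tiling approach breaks down, and the strategy changes: instead of converting a non-separation witness into a non-synchronization witness, the goal is to prove directly that every such group is non-synchronizing. A complete mapping $\phi$ of~$T$ (a bijection with $x\mapsto x\phi(x)$ also bijective) is used to package a Latin-square-style non-trivial partition of $T^{n-1}$ together with a $G$-invariant transversal, witnessing non-synchronization. This is exactly where the Hall--Paige conjecture enters: for non-abelian simple~$T$ the Sylow $2$-subgroups are non-cyclic, so Hall--Paige (now completed for $\jan{4}$ in Section~2) supplies~$\phi$. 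Non-separation then follows automatically from separation $\Rightarrow$ synchronization.
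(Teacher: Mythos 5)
Your plan is correct and follows the paper's proof essentially verbatim: the Hamming graph disposes of the product-action case, the affine and two-socle-factor diagonal cases are reduced to a Cayley-graph/tiling argument via the regular socle, and for diagonal groups with $n\ge3$ socle factors a Hall--Paige complete mapping yields a proper $|T|$-colouring of a clique-number-$|T|$ Latin-square-type graph on $T^{n-1}$ (the paper in fact only needs the complete mapping when $n$ is odd). The one step you leave implicit --- that translates of a maximum coclique $B$ genuinely tile the group, which is \emph{not} automatic from vertex-transitivity and $\omega\alpha=|\Omega|$ --- is exactly the content of the paper's Propositions~\ref{pr1} and~\ref{pr2} and Corollary~\ref{cor:leftright}: a non-separation witness $(A,B)$ forces the exact factorisation $H=A^{-1}B$, and in the non-abelian two-factor case one must additionally use the presence of both left and right regular actions to conclude that $A^{-1}$ is again a clique, so that the parts $a^{-1}B$ are independent.
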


This follows immediately from the next two theorems.

\begin{theorem}
Let $G$ be a primitive permutation group of simple diagonal type, with more
than two factors in the socle. Then $G$ is non-synchronizing (and hence
non-separating).
\label{t:d3}
\end{theorem}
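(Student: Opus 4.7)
The plan is to apply Theorem~1.1(a): produce a partition $P$ of $\Omega$ and a subset $A$ such that $Ag$ is a transversal of $P$ for every $g\in G$. Non-separation will then follow from non-synchronization via the remark immediately before Theorem~\ref{thrm:nonsep}, since the parts of $P$ will all have the same size.

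First I would set up the standard geometry of a simple diagonal group. Identify $\Omega$ with $T^m/\mathrm{diag}(T)$ under the left diagonal action and normalize coset representatives so that the last coordinate is~$1$; this yields $\Omega\cong T^{m-1}$. There are $m$ natural partitions $P_1,\dots,P_m$ of $\Omega$, each with $|T|^{m-2}$ parts of size $|T|$: two points lie in a common part of $P_i$ iff their $T^m$-representatives agree in all coordinates except the $i$-th (up to the diagonal). Crucially, the socle $T^m$ preserves each $P_i$ individually, while the $\Sym_m$-factor of the point stabilizer permutes the \emph{set} $\{P_1,\dots,P_m\}$; hence $G$ stabilizes this set of partitions. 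The aim is now to construct a subset $A\subset T^{m-1}$ of size $|T|^{m-2}$ that is a common transversal of every $P_i$. For then, given $g\in G$ and a part $B$ of $P_1$, the pre-image $Bg^{-1}$ is a part of some $P_j\in\{P_1,\dots,P_m\}$, so $|Ag\cap B|=|A\cap Bg^{-1}|=1$; that is, $Ag$ is a transversal of $P_1$, and $(A,P_1)$ witnesses non-synchronization.

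The construction of $A$ is where the Hall--Paige conjecture (now a theorem) enters: since the non-abelian simple group $T$ has non-cyclic Sylow $2$-subgroups, it admits a complete mapping $\phi\colon T\to T$. For $m=3$ the choice $A=\{(t,\phi(t)):t\in T\}$ works directly: bijectivity of $\phi$ gives transversality of $P_1$ and $P_2$, while transversality of the ``diagonal'' partition $P_3$ is precisely the complete-mapping condition. For $m\ge4$ one writes $A$ as the graph of a function $f\colon T^{m-2}\to T$ and asks that $f$ be bijective in each variable (giving transversality of $P_1,\dots,P_{m-1}$) and that $f(ti_1,\dots,ti_{m-2})=tf(i_1,\dots,i_{m-2})$ force $t=1$ (giving transversality of $P_m$). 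A suitable $f$ can be built from $\phi$ and left-invariant ratios like $i_k^{-1}i_{k+1}$, so that when one compares $f(t\mathbf{i})$ with $tf(\mathbf{i})$ the extra factors of $t$ telescope to one that has to vanish.

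The principal obstacle is this construction of $A$ for $m\ge 4$: one must coordinate the Latin-cube property with the transversality of the ``diagonal'' partition $P_m$. For $m=3$ the two requirements merge into the standard complete-mapping criterion; for $m=4$, already $f(i,j)=ij$ suffices without Hall--Paige, because comparing $f(ti,tj)=titj$ with $tf(i,j)=tij$ leaves exactly one surplus factor of $t$ that must cancel. For $m\ge5$ the surplus grows and will not vanish using the bare group operation, so one genuinely needs Hall--Paige to supply $\phi$ and to absorb the extra factors through a carefully designed $f$.
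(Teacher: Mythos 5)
Your framework is sound, and it is really the dual of the paper's own argument: the paper builds a graph $\Gamma$ whose maximal cliques are exactly the parts of your partitions $P_1,\dots,P_m$ (the $m-1$ coordinate partitions plus the diagonal one) and exhibits a proper colouring with $|T|$ colours; each colour class of that colouring is precisely a common transversal of all the $P_i$, i.e.\ your set $A$. Your reduction is correct: $G$ permutes the set $\{P_1,\dots,P_m\}$, so a common transversal $A$ together with $P_1$ witnesses non-synchronization by the partition/transversal criterion, and non-separation follows. Your $m=3$ and $m=4$ constructions also check out (for $m=3$ the diagonal condition asks that $t\mapsto t^{-1}\phi(t)$ be bijective rather than $t\mapsto t\phi(t)$, but this is a cosmetic adjustment of the complete mapping).

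The gap is that for $m\ge5$ the construction of $A$ --- which is the entire content of the theorem --- is only asserted, and your stated criterion for when Hall--Paige is needed is wrong, which indicates the telescoping has not actually been carried out. The dichotomy is by parity, not by size. Write $A$ as the graph of $f$ over the coordinates $t_2,\dots,t_{m-1}$ of $\Omega\cong T^{m-1}$ (last coordinate $t_m=f(t_2,\dots,t_{m-1})$). If $m$ is even, take $f(t_2,\dots,t_{m-1})=(t_2^{-1}t_3)(t_4^{-1}t_5)\cdots(t_{m-2}^{-1}t_{m-1})$: each variable occurs exactly once, so $f$ is bijective in each variable, while $f(xt_2,\dots,xt_{m-1})=f(t_2,\dots,t_{m-1})$, so the diagonal condition $f(x\mathbf{t})=xf(\mathbf{t})$ forces $x=1$. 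Hence no complete mapping is needed for any even $m$, contrary to your claim about $m=6,8,\dots$; this is exactly the paper's ``$n$ even'' colouring $(t_2^{-1}t_3)\cdots(t_{n-2}^{-1}t_{n-1})t_n^{-1}$ read off fibrewise. If $m$ is odd, the $m-1$ coordinates $t_2,\dots,t_m$ pair off completely into ratios, and a bare product of ratios is then diagonal-invariant, so its fibres are unions of diagonal parts rather than transversals of $P_m$; the fix is to twist the last factor to $t_{m-1}^{-1}\psi(t_m)$ with $\psi(g)=g\phi(g)$ and take $A$ to be a fibre of the resulting map. Bijectivity of $\psi$ preserves the Latin-cube property, and the diagonal condition reduces to $\psi(xt)\ne x\psi(t)$ for $x\ne1$, i.e.\ to injectivity of $\phi$ --- exactly where Hall--Paige enters. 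Until such an $f$ is written down and both conditions verified for general $m$, the proof is incomplete at precisely the step the theorem is about.
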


The proof of this theorem requires the Hall--Paige conjecture; the statement
of the conjecture, and the final case in its proof (for the sporadic simple
group $\jan{4})$, are given in Section~\ref{s:hp} below.

\begin{theorem}
Let $G$ be a primitive permutation group which is either of affine type,
or of simple diagonal type with two factors in the socle. Then $G$ is
synchronizing if and only if it is separating.
\label{t:ad2}
\end{theorem}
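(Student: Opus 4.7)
The plan is to derive both parts from a common clique--coclique argument on Cayley graphs, using only graph theory and no complete mappings (in contrast to Theorem~\ref{t:d3}). Since separation already implies synchronization, I only need to prove the reverse direction: that non-separation forces non-synchronization. By Theorem~\ref{thrm:nonsep}, non-separation of $G$ produces a $G$-invariant graph $\Gamma$ on $\Omega$, neither null nor complete, with $\omega(\Gamma)\cdot\alpha(\Gamma)=|\Omega|$, where $\omega$ and $\alpha$ denote the clique and independence numbers. By Theorem~1.1(b) it then suffices to exhibit a proper colouring of this same $\Gamma$ using exactly $\omega(\Gamma)$ colours, since that forces $\chi(\Gamma)=\omega(\Gamma)$.

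In the affine case, $\Omega=V$ and the translation group $V$ is contained in $G$, so $\Gamma=\mathrm{Cay}(V,S)$ for some connection set $S\subseteq V\setminus\{0\}$ invariant under the point stabiliser. Pick a maximum clique $C$ and a maximum independent set $I$ with $0\in C\cap I$. Then $C-C\subseteq S\cup\{0\}$ while $(I-I)\cap S=\emptyset$, so $(C-C)\cap(I-I)=\{0\}$; equivalently, the map $(c,i)\mapsto c+i\colon C\times I\to V$ is injective, and hence bijective by the clique--coclique equality $|C|\cdot|I|=|V|$. This makes $\{I+c:c\in C\}$ a partition of $V$ into $|C|=\omega(\Gamma)$ translates of $I$, each an independent set, and that is the required proper colouring.

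For simple diagonal type with two socle factors, $\Omega=T$ and the socle $T\times T$ acts by $(a,b)\colon x\mapsto a^{-1}xb$, so $G$ contains both the left and right regular representations of $T$. Hence $\Gamma$ is a \emph{normal} Cayley graph $\mathrm{Cay}(T,S)$ with connection set $S$ closed under $T$-conjugation. The injectivity argument now runs: choosing $C,I$ containing the identity $e$ gives $CC^{-1}\subseteq S\cup\{e\}$ and $II^{-1}\cap S=\emptyset$; normality upgrades these to $C^{-1}C\subseteq S\cup\{e\}$ and $I^{-1}I\cap S=\emptyset$; then $c_1i_1=c_2i_2$ yields $c_2^{-1}c_1=i_2i_1^{-1}\in(S\cup\{e\})\cap((T\setminus S)\cup\{e\})=\{e\}$, forcing $c_1=c_2$ and $i_1=i_2$. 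The left cosets $\{cI:c\in C\}$ therefore partition $T$ into $\omega(\Gamma)$ sets, each independent because left multiplication by $c$ is an automorphism of the normal Cayley graph, and this is the required proper colouring.

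The main point to watch is the use of normality of the connection set in the diagonal case: it is precisely what converts the one-sided condition $II^{-1}\cap S=\emptyset$ coming from $I$ being independent into the two-sided condition needed for the factorisation $T=CI$. This is where having \emph{two} factors in the socle is essential, since one factor alone would only furnish one side of the group action. In the three-or-more-factor setting of Theorem~\ref{t:d3} the corresponding structure no longer embeds into a single normal Cayley graph on $T$, and one must instead appeal to complete mappings and hence to the Hall--Paige conjecture; here, by contrast, the whole argument is purely graph-theoretic.
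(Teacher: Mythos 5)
Your proposal is correct and is essentially the paper's own argument: both exploit that $G$ contains the left and right regular actions of a regular subgroup ($V$ abelian in the affine case, $T$ via the two socle factors in the diagonal case), so the witnessing graph is a Cayley graph with conjugation-closed connection set, and normality turns the clique--coclique pair through the identity into an exact factorisation whose translates give the required partition. The only difference is cosmetic: you phrase the conclusion as a proper colouring with $\omega(\Gamma)$ colours via Theorem~1.1(b), whereas the paper packages the same factorisation through Propositions~\ref{pr1} and~\ref{pr2} and Corollary~\ref{cor:leftright} using the transversal formulation of non-synchronization.
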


We remark that the affine case of this result is in~\cite{cz}: the proof
given below is a generalisation of the proof in~\cite{cz}.

\section{The Hall--Paige conjecture}
\label{s:hp}

\subsection{Preliminaries}

A \emph{complete mapping} on a group $G$ is a bijective function $\phi:G\to G$
such that the function $\psi:G\to G$ given by $\psi(g)=g\phi(g)$ is also a
bijection.

\begin{theorem}
A finite group $G$ has a complete mapping if and only if its Sylow $2$-subgroups
are not cyclic.
\label{t:hp}
\end{theorem}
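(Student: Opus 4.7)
The plan is to combine the classical necessity argument of Hall and Paige with a reduction-plus-classification strategy for the sufficient direction, culminating in an explicit computation for the one remaining simple group, $\jan{4}$.

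For the \emph{only if} direction I follow Hall and Paige. If $\phi$ is a complete mapping and $\psi(g)=g\phi(g)$, then $\phi$ and $\psi$ both permute $G$, so for any fixed enumeration of $G$,
\[ \prod_{g\in G} g \;=\; \prod_{g\in G}\phi(g) \;=\; \prod_{g\in G} g\phi(g). \]
A short calculation in the abelianisation $G/[G,G]$ shows that the image of $\prod_{g\in G}g$ in $G/[G,G]$ equals the product of all involutions there, and that in turn equals the unique involution of a Sylow $2$-subgroup of $G$ whenever that subgroup is nontrivial cyclic. In that case the product is nonidentity, contradicting the displayed equalities.

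For the \emph{if} direction, groups of odd order are immediate since then $\phi=\mathrm{id}$ works (squaring being a bijection on odd-order groups). For the remainder I appeal to Wilcox's reduction theorem, which asserts that if every non-abelian finite simple group with non-cyclic Sylow $2$-subgroup admits a complete mapping, then so does every finite group with non-cyclic Sylow $2$-subgroup. By the Classification of Finite Simple Groups it then suffices to construct a complete mapping in each relevant alternating, Lie-type and sporadic simple group: the alternating cases $\AAA_n$ for $n\ge 5$ are in the original Hall--Paige paper; the Lie-type cases are due to Wilcox together with subsequent authors clearing up the residual small-rank cases; and every sporadic group other than $\jan{4}$ is handled by Evans.

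The main obstacle is therefore the construction of a complete mapping for $\jan{4}$, whose order is approximately $8.6\times 10^{19}$ and for which naive enumeration is hopeless. The strategy is to select a subgroup $H\le\jan{4}$ so that the set of $(H,H)$ double cosets of $\jan{4}$ is of manageable size, and to search for a complete mapping whose structure is compatible with the corresponding block decomposition of $\jan{4}$; existence can then be reduced to a bipartite matching problem solvable by Hall's marriage theorem or by direct backtrack. Concrete element arithmetic can be carried out inside an explicit faithful representation of $\jan{4}$, for example the $112$-dimensional representation over $\BBF_2$, using \GAP. The delicate point is the choice of $H$ (and of any secondary refinement) so that the search space, the matching instance, and the final verification all stay within reach of current hardware; the sequel carries out this computation and so completes the proof.
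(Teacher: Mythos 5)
Your overall architecture is the same as the paper's: necessity via the Hall--Paige product argument in the abelianisation, sufficiency via Wilcox's reduction to simple groups together with the classification, with alternating groups from Hall--Paige, groups of Lie type from Wilcox (all but the Tits group, which Evans settled along with every sporadic group except $\jan{4}$). Up to that point your proposal is sound, modulo two small quibbles: the odd-order case ($\phi=\mathrm{id}$) is subsumed by the reduction and requires reading ``not cyclic'' as including the trivial Sylow $2$-subgroup; and there are no residual small-rank Lie-type cases beyond the Tits group.

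The genuine gap is the $\jan{4}$ case, which is the entire new content of the theorem and which you defer to ``the sequel''. Three concrete ingredients are missing. First, a precise sufficient condition: the paper uses Wilcox's Corollary~15 (Proposition~\ref{p:w15}) --- if $H\le G$ has a complete mapping and there are bijections $\phi,\psi$ of the set of $(H,H)$ double cosets with $|D|=|\phi(D)|=|\psi(D)|$ and $\psi(D)\subseteq D\phi(D)$, then $G$ has a complete mapping; in particular it suffices that every double coset other than $H$ contain an element of order $3$ (take $\phi=\mathrm{id}$, $\psi(D)=D^{-1}$), or that $D\subseteq D^2$ for all $D$. Your ``bipartite matching / Hall's marriage theorem'' remark gestures at the proof of that criterion but never states a verifiable condition. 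Second, the choice of subgroup: the paper takes $H=\cen_G(x)\cong\xsp{+}{12}.(3\udot\mat{22}\cn 2)$ for a $2\AAA$-involution $x$, which already has a complete mapping by the earlier results and for which the number of double cosets (the rank, $20$) is obtainable from the permutation character. Third, the actual verification: distinguishing the $20$ orbitals by dimension invariants in the $112$-dimensional $\BBF_2$-module, exhibiting an order-$3$ representative of each nontrivial double coset, and computing enough collapsed adjacency matrices to confirm $(A_i)_{ii^*}\ne 0$ for every $i$. Without the criterion, the subgroup, and the computation, the hardest case of the theorem remains unproved in your write-up.
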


This was conjectured by Hall and Paige \cite{hp}, who proved (among other
things) the  necessity of the condition, and showed its sufficiency for
alternating groups. Wilcox~\cite{wilcox} reduced the conjecture
to the case of simple groups, and proved it for groups of Lie type except for
the Tits group. Evans~\cite{evans} handled the Tits group and all the sporadic
groups except $\jan{4}$. The proof in the final case was announced by the first
author; we give details here.

We note in passing that the existence of a complete mapping for $G$ is
equivalent to the existence of an orthogonal mate of the Latin square which
is the Cayley table of $G$.

As well as completing the proof of the Hall--Paige conjecture, this section
is also an example of how it is possible to compute collapsed adjacency for a
permutation group of rather large degree (more than $10^9$).

Our main tool is the following \cite[Corollary~15]{wilcox}:

\begin{prop}
Let $G$ be a group having a subgroup $H$ which has a complete mapping. 
Let $\mathcal{D}$ be the set of double cosets $HgH$ of $H$ in $G$.
Suppose that there exist bijections $\phi,\psi:\mathcal{D}\to\mathcal{D}$
such that $|D|=|\phi(D)|=|\psi(D)|$ and $\psi(D)\subseteq D\phi(D)$ for all
$D\in\mathcal{D}$. Then $G$ has a complete mapping.
\label{p:w15}
\end{prop}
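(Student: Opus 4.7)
The plan is to construct a complete mapping of $G$ by assembling partial bijections, one per double coset, using $\phi$ and $\psi$ to prescribe the targets.

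First I would reduce to a per-double-coset problem. Because $\mathcal{D}$ partitions $G$ and $\phi,\psi$ are bijections of $\mathcal{D}$, the sets $\bigsqcup_{D\in\mathcal{D}}\phi(D)$ and $\bigsqcup_{D\in\mathcal{D}}\psi(D)$ both equal $G$. So it is enough to exhibit, for every $D\in\mathcal{D}$, a bijection $\Phi_D:D\to\phi(D)$ such that the product map $d\mapsto d\Phi_D(d)$ is a bijection from $D$ onto $\psi(D)$; then $\Phi:=\bigsqcup_D\Phi_D$ is a bijection $G\to G$, and $g\mapsto g\Phi(g)$ is also a bijection, i.e.\ a complete mapping of $G$.

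Next, I would fix $D\in\mathcal{D}$ and construct $\Phi_D$ using the complete mapping $\theta$ of $H$. Decomposing $D$ into right $H$-cosets and $\phi(D),\psi(D)$ into left $H$-cosets, each decomposition has exactly $k=|D|/|H|$ pieces, by the size equality. The hypothesis $\psi(D)\subseteq D\phi(D)$, together with $H$-biinvariance, forces the products of right-cosets of $D$ with left-cosets of $\phi(D)$ to distribute uniformly over the left-cosets of $\psi(D)$. I would extract compatible matchings: a matching $\sigma$ pairing right-cosets $Hy_j$ of $D$ with left-cosets $z_{\sigma(j)}H$ of $\phi(D)$, and a matching $\tau$ onto left-cosets $w_{\tau(j)}H$ of $\psi(D)$, such that within each triple $(Hy_j,z_{\sigma(j)}H,w_{\tau(j)}H)$ one can arrange $(hy_j)(z_{\sigma(j)}h')$ to trace $w_{\tau(j)}H$ bijectively as $h$ runs over $H$. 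Setting $\Phi_D(hy_j)=z_{\sigma(j)}\widetilde\theta(h)$ for a suitably twisted $\widetilde\theta$ derived from $\theta$ then yields both the required bijection $Hy_j\to z_{\sigma(j)}H$ and the required product behaviour, with the essential input being that $\theta$ is a complete mapping of $H$.

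The main obstacle, I expect, is the combinatorial bookkeeping in the second step. First, one must establish existence of the matchings $\sigma,\tau$ with the compatibility property, which I would try to do by a Hall-marriage style argument using $\psi(D)\subseteq D\phi(D)$ and the three-fold size equality to verify the regularity (or Hall) condition on the natural bipartite multigraph of left/right $H$-cosets. Second, one must verify that $\theta$ can indeed be transported across the coset representatives $y_j,z_{\sigma(j)}$, which lie outside $H$ and thus conjugate $H$ into a different (though conjugate) subgroup; the `twist' $\widetilde\theta$ must be chosen so that both $\widetilde\theta$ and $h\mapsto h\widetilde\theta(h)$ remain bijections of $H$ after this transport. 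I would handle this last point by a change-of-variables argument that ultimately reduces the twisted assertion back to the complete mapping property of $\theta$ itself on $H$.
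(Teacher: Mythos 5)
The paper does not actually prove this proposition; it quotes it verbatim as Corollary~15 of Wilcox's paper, so the only honest comparison is with Wilcox's argument. Your first step --- gluing per-double-coset bijections, using that $\{\phi(D)\}$ and $\{\psi(D)\}$ each partition $G$ --- is correct and is exactly how Wilcox deduces his Corollary~15 from a key lemma asserting that for double cosets $D_1,D_2,D_3$ of equal size with $D_3\subseteq D_1D_2$ one can order them so that $u_iv_i=w_i$. All the substance is in that lemma, and this is where your sketch breaks down.

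Concretely, your atomic construction is the wrong way round. If you pair a \emph{right} coset $Hy_j\subseteq D$ with a \emph{left} coset $z_{\sigma(j)}H\subseteq\phi(D)$ via $hy_j\mapsto z_{\sigma(j)}\widetilde\theta(h)$, the products are $h\,(y_jz_{\sigma(j)})\,\widetilde\theta(h)$, and the set of these as $h$ runs over $H$ is a left coset $wH$ only if $y_jz_{\sigma(j)}$ normalises $H$ (write $w=h_1gh_2$ with $g=y_jz_{\sigma(j)}$: membership of $h\,g\,\widetilde\theta(h)$ in $wH$ for all $h$ forces $g^{-1}Hg\subseteq H$). So no choice of twist $\widetilde\theta$ makes the product ``trace $w_{\tau(j)}H$ bijectively'' in general. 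The pairing that does work is a \emph{left} coset $xH$ of $D$ against a \emph{right} coset $Hy$ of $\phi(D)$: with $\theta$ a complete mapping of $H$, the map $xh\mapsto\theta(h)y$ is a bijection $xH\to Hy$ whose products $x(h\theta(h))y$ sweep out $xHy$ bijectively --- no twisting needed. But then the resulting blocks $xHy$ are neither left nor right cosets of $H$ and need not even lie inside a single double coset, so the remaining task --- choosing a matching $\sigma$ so that the sets $x_iHy_{\sigma(i)}$ partition $\psi(D)$ --- is not the biregular-bipartite-graph matching problem you describe, and Hall's theorem applied to cosets does not address it. This is precisely where Wilcox has to use the equalities $|D|=|\phi(D)|=|\psi(D)|$ (to match up the relevant subgroup indices and coset decompositions) and the hypothesis $\psi(D)\subseteq D\phi(D)$; in your proposal these hypotheses are never genuinely invoked beyond counting the number of cosets. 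As it stands the proposal is a plausible-sounding outline whose central step both contains an error (the coset orientation) and omits the real argument.
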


Note that the hypothesis about $\phi$ and $\psi$ is satisfied if it is the
case that every double coset except possibly $H$ has a representative of
order~$3$. For if $t$ is such a representative, then $t^{-1}=t^2\in D^2$,
and we can take $\phi(D)=D$ and $\psi(D)=D^{-1}$ for all $D\in\mathcal{D}$.

In more graph-theoretic terms, $G$ acts on the set of right cosets of $H$ by
right multiplication; each double coset $D$ corresponds to an orbital
graph $\Gamma$, where $D$ maps the point fixed by $H$ into its neighbourhood
in $\Gamma$; so if $t\in D$ has order $3$ and maps $x$ to $y$, then $t$ has
a $3$-cycle $(x,y,z)$, where the edges $(x,y)$, $(y,z)$, $(z,x)$ all belong to
$\Gamma$.

In fact, \cite[Corollary 16]{wilcox} gives a simpler sufficient condition,
namely $D\subseteq D^2$ for every double coset $D$ (taking $\phi$ and $\psi$
to be the identity maps); we will gain enough information to use this version
as an alternative.

The maximal subgroups of $\jan{4}$ are determined in~\cite{kw}.
From now on, we take $G=\jan{4}$, and $H$ the maximal subgroup
$\xsp{+}{12}.(3\udot \mat{22}\cn 2)$ (the third in the list in~\cite{kw}, and
the second
in the \wwwatlas \cite{www-atlas}, from which information about the group $G$
will be taken). Note that the existence of a complete mapping of $H$ follows
from the earlier results of \cite{evans,wilcox}.

Now $H$ is the full centraliser in $G\cong\jan{4}$ of a $2\AAA$-involution
($\jan{4}$-class), $x$ say, so that the actions of $G$ on the right cosets of
$H$ and on the conjugates of $x$ are isomorphic, with $Hg$ corresponding to
$x^g$. We shall consider our permutation action as a conjugation on
$2\AAA$-involutions from now on.

\subsection{Investigating the representation}

We first use character theory to obtain some basic information about our
permutation representation of degree $3980549947$: in particular, its rank
(the number of orbitals) and the number of self-paired orbitals. For this,
the tool is character theory. Fortunately \GAP~\cite{GAP} stores character tables for both
the groups $G\cong\jan{4}$ and $H\cong\xsp{+}{12}.(3\udot \mat{22}\cn 2)$.
Using \GAP, we find that there are 128 possible class fusions of $H$ into $G$, but that they
all give rise to the same permutation character, namely:
$$
\begin{array}{r@{\:+\:}l}
1 & 889111 + 1776888 + 4290927 + 35411145a^2b^2 + 95288172 \\
  & 230279749 + 259775040ab + 460559498 + 493456605 + 1016407168ab.
\end{array}
$$
In the above characters have been labelled with their degrees, with distinguishing letters if necessary,
and exponents denote multiplicity. All the above characters are integer valued except:
$$
\begin{array}{cc}
\mbox{Character} & \mbox{Irrationalities} \\ \hline
35411145a/b & \frac{1}{2}(1\pm \sqrt{33}) \\
259775040a/b & \pm 2\sqrt{3} \\
1016407168a/b & -1\pm2\sqrt{5}, \pm\sqrt{5} \\
\end{array}
$$
Thus the permutation character is a sum of 16 real characters, all with indicator $+$,
consisting of 12 characters occurring just once, and 2 characters that have multiplicity 2.
From general theory, the rank of
this permutation action is $20$ (the inner product of the permutation character with itself),
and there are $16$ self-paired orbitals (this is $\sum_i \ind(\chi_i)$, where the
permutation character is $\sum_i \chi_i$ and $\ind(\chi)$ denotes the Frobenius--Schur
indicator of $\chi$; here $\ind(\chi_i)$ is equal to $1$ for each of the 16 values of $i$).
Thus two pairs of non-self-paired orbitals which are not self-paired.

\subsubsection{Structure constant investigation}

The arguments in this (subsub-)section are not strictly necessary for the proof of the
Hall--Paige conjecture, but were used in the initial investigation of the problem.
If $x,y,x',y'$ are $2\AAA$-involutions and the pairs $(x,y)$ and $(x',y')$ are conjugate,
say $(x,y)^g=(x',y')$, then the elements $xy$ and $x'y'$ are conjugate (by $g$).
We now use (in \GAP) symmetrised structure constants to determine what classes are possible for $xy$.

For a group $G$, given classes $C_1=g_1^G,\, C_2=g_2^G, \ldots,\, C_n=g_n^G$ (where $g_1,\ldots,g_n$ are
arbitrary and repetitions are allowed), we define
$$
\hat{\xi}_G(C_1,C_2,\ldots,C_n)=
\frac{|G|^{n-1}}{|\cen_G(g_1)||\cen_G(g_2)|\cdots|\cen_G(g_n)|}
\sum_{\chi\in\mathrm{Irr}(G)} \frac{\chi(g_1)\chi(g_2)\cdots\chi(g_n)}{\chi(1)^{n-2}},
$$
which is the number of $n$-tuples $(x_1,\ldots,x_n)\in C_1\times \cdots \times C_n$ such that
$x_1\cdots x_n=1$. In practice, we prefer to count conjugacy classes of such tuples, and we have:
\begin{align*}
\xi_G(C_1,C_2,C_3)&=
\frac{|G|}{|\cen_G(g_1)|{\;\!\! . \;\!\!}|\cen_G(g_2)|{\;\!\! . \;\!\!}|\cen_G(g_3)|}
\sum_{\chi\in\mathrm{Irr}(G)} \frac{\chi(g_1)\chi(g_2)\chi(g_3)}{\chi(1)}\\
&=
\sum \frac{1}{|\cen_G(y_1,y_2,y_3)|},
\end{align*}
where the latter sum is taken over conjugacy class representatives of triples
$(y_1,y_2,y_3)\in C_1\times C_2 \times C_3$ such that $y_1y_2y_3=1$, and
$C_G(y_1,y_2,y_3)$ is the set of elements centralising each of $y_1,y_2,y_3$.
The structure constant calculations yield the information in Table~\ref{StrCon}.

\begin{table}[ht]
\begin{center}
\caption{The $(2\AAA,2\AAA,C)$ structure constants in $\jan{4}$}\label{StrCon}\vspace{5pt}
\begin{tabular}{c|c|c}
$C$ & $|H|\xi_{\jan{4}}(2\AAA,2\AAA,C)$ & $\xi_{\jan{4}}(2\AAA,2\AAA,C)$ \\ \hline
1A    &          1 &    $\frac{1}{21799895040}$ \\[3pt]
2A    &     112266 & $\frac{27}{5242880}$ \\[3pt]
2B    &      81840 & $\frac{31}{8257536}$ \\[3pt]
3A    &    8110080 & $\frac{1}{2688}$ \\[3pt]
4A    &     887040 & $\frac{1}{24576}$ \\[3pt]
4B    &   70963200 &   $\frac{5}{1536}$ \\[3pt]
4C    &   14192640 &   $\frac{1}{1536}$ \\[3pt]
5A    &  113541120 &   $\frac{1}{192}$ \\[3pt]
6B    &  340623360 &   $\frac{1}{64}$ \\[3pt]
6C    &   56770560 &   $\frac{1}{384}$ \\[3pt]
8C    &  340623360 &   $\frac{1}{64}$ \\[3pt]
10A   &  681246720 &   $\frac{1}{32}$ \\[3pt]
11B   &  990904320 &   $\frac{1}{22}$ \\[3pt]
12B   & 1362493440 &   $\frac{1}{16}$ \\[3pt]
other &          0 &     0 \\
\end{tabular}
\end{center}
\end{table}

Some of these rows correspond to more than one orbital, since there are only
fourteen non-zero rows. Paired orbitals are represented by the same row, and
Lemma 1.1.3 of \cite{kw} gives a splitting of the rows corresponding to
$2\AAA$ and $2\BB$ involutions. Further investigations in the group allowed
a complete splitting of the rows into orbitals. In particular, the splittings
of the $2\AAA$ and $2\BB$ rows are
$112266 = 1386+110880$ and $81840 = 18480 + 63360$. This gives us the smallest
orbitals, which are useful for further computation.

%
%

\subsection{Working in $\jan{4}$}

We have to choose a representation in which to do the computations, which must
be not too large and must allow  us to distinguish the orbitals with ease.

By far the most convenient representation for computational purposes turns out to be the (irreducible) 112-dimensional
representation of $\jan{4}$ over $\BBF_2$, which happens to be the smallest
representation in any characteristic. (The smallest faithful representation of $\jan{4}$ in odd
characteristic is $1333$, the same as in characteristic $0$, and the next smallest irreducible representation(s)
in characteristic 2 probably have degree $1220$. There are also no non-split modules with composition factors of dimensions $1$ and $112$, or $112$ and $1$.)
Such a representation of $\jan{4}$ is available from the \wwwatlas~\cite{www-atlas}.

Given a pair $(x,y)$ of involutions, we define the subspaces $V_i$ of $V:=\BBF_2^{112}$ as follows:
$V_0=V$, and for $i>0$ we have $V_{i+1}:=\langle V_i(1-x),V_i(1-y)\rangle = V_i(1-x)+V_i(1-y)$.
One easily proves (using induction) that if $V_i'$ is similarly defined starting from the pair
$(x',y')=(x,y)^g=(x^g,y^g)$ then $V_i'=V_i^g(=V_i.g)$ for all $i\in \mathbb{N}$.
Thus $d_i:=\dim V_i$ is an invariant of the conjugacy class of pairs $(x,y)$ of $2\AAA$-involutions.
Similarly, the dimensions $d'_1:=\dim(V(1-x)+V(1-yxy))$ and $d'_2:=\dim(V(1-y)+V(1-xyx))$ are
also invariants of conjugacy classes of pairs $(x,y)$ of $2\AAA$-involutions.
Only the invariants $d'_1$ and $d'_2$ are capable of distinguishing an orbital from its pair.
It turns out that the invariants $(d_1,d_2,d'_1,d'_2)$ suffice to distinguish all
the orbitals.

In order for our results to be reproducible, it is necessary to represent group elements
in terms of a tuple of `standard' generators. This consists of a generating tuple together
with some conditions that specify the tuple up to automorphism; in the case of $\jan{4}$
this means up to conjugacy, since all automorphisms of $\jan{4}$ are inner.
We use Rob Wilson's [Type I] standard generators of $\jan{4}$ given in the \wwwatlasn,
since we consider these as easy to find as any others. These are defined to be $a$ and $b$,
where $a$ is in class $2\AAA$, $b$ is in class $4\AAA$, $ab$ has order $37$ and $abab^2$ has order $10$.
The `black box' algorithm in the \wwwatlas suggests how finding the standard generators can be achieved.

The \wwwatlas supplies matrices for the $112$-dimensional $\BBF_2$-representation
of $\jan{4}$ on standard generators $a$ and $b$. We define further elements as follows:
$$
t:=(ab^2)^4,\quad c:=ab\quad\mbox{and}\quad d:=ba.
$$
We note that $t$ has order $3$, and that $c$ and $d$ are elements of a conveniently large order,
in this case $37$.

We now searched for representatives of all the orbitals, using the fingerprints given above,
some of which took quite some finding.
The information is summarised in Table~\ref{MainInfo},
which gives information on representatives of orbitals of $2\AAA$ involutions.

For a representative $(x,y)$ of each orbital the following information is displayed.
The numbers $d_1$, $d_2$, $d'_1$ and $d'_2$ are the above dimensions;
$s_1:=|y^{\cen_G(x)}|$, $s_2:=|\cen_G(\langle x,y\rangle)|$, so that
$s_1s_2=|\cen_G(x)|=21799895040$; `class' is the conjugacy class of $xy$ (in $\jan{4}$);
$t_i$ is an element such that $(a,a^{t_i})$ is a representative of orbital $i$;
and `pair' gives the number of the paired orbital of the current orbital (when different).
Note that at this stage we do not need the values of $s_1$ and $s_2$ given in the table. We will see later how these numbers can be computed.

%
%

We observe from the table that all double coset representatives apart from the
identity are conjugates of $t$, and so have order $3$; thus the conditions
of Proposition~\ref{p:w15} (\cite[Corollary 15]{wilcox}) with $\phi(D)=D$ and
$\psi(D)=D^{-1}$ are satisfied.

\begin{table}[ht]
\begin{center}
\caption{Information on representatives of orbitals of $2\AAA$ involutions}\label{MainInfo}\vspace{5pt}
\begin{tabular}{cc|c|ccccc|rl}
Nr & pair & $t_i$ & class & $d_1$ & $d_2$ & $d'_1$ & $d'_2$ & $s_1$ & $s_2$ \\ \hline
1 & self & identity & 1A & 50 & 0 & 50 & 50 & 1 & 21799895040 \\
2 & self & $t^{c^3d^3c^{21}d^{12}}$ & 2A & 72 & 16 & 50 & 50 & 1386 & 15728640 \\
3 & self & $t^{c^{12}d^{8}}$ & 2A & 75 & 20 & 50 & 50 & 110880 & 196608 \\
4 & self & $t^{c^2d^9c^{10}d^5}$ & 2B & 76 & 20 & 50 & 50 & 18480 & 1179648 \\
5 & self & $t^{c^2d^{11}c^8d^3}$ & 2B & 78 & 22 & 50 & 50 & 63360 & 344064 \\
6 & self & $t$ & 3A & 86 & 72 & 86 & 86 & 8110080 & 2688 \\
7 & self & $t^{c^3d^{10}c^{34}}$ & 4A & 88 & 56 & 72 & 72 & 887040 & 24576 \\
8 & 9 & $t^{c^{12}d^{31}}$ & 4B & 89 & 58 & 72 & 75 & 3548160 & 6144 \\
9 & 8 & $t^{c^{6}d^{27}}$ & 4B & 89 & 58 & 75 & 72 & 3548160 & 6144 \\
10 & self & $t^{c^2d^{27}}$ & 4B & 90 & 59 & 75 & 75 & 21288960 & 1024 \\
11 & self & $t^{c^4d^2}$ & 4B & 90 & 60 & 75 & 75 & 42577920 & 512 \\
12 & 13 & $t^{c^5d^{29}}$ & 4C & 91 & 63 & 76 & 78 & 7096320 & 3072 \\
13 & 12 & $t^{c^2d^{35}}$ & 4C & 91 & 63 & 78 & 76 & 7096320 & 3072 \\
14 & self & $t^{c^7}$ & 5A & 94 & 88 & 94 & 94 & 113541120 & 192 \\
15 & self & $t^{c^3}$ & 6B & 95 & 76 & 86 & 86 & 340623360 & 64 \\
16 & self & $t^{c^8}$ & 6C & 96 & 76 & 86 & 86 & 56770560 & 384 \\
17 & self & $t^{c^2d^6}$ & 8C & 98 & 82 & 90 & 90 & 340623360 & 64 \\
18 & self & $t^{c^5}$ & 10A & 99 & 88 & 94 & 94 & 681246720 & 32 \\
19 & self & $t^{c}$ & 11B & 100 & 100 & 100 & 100 & 990904320 & 22 \\
20 & self & $t^{c^2}$ & 12B & 100 & 90 & 95 & 95 & 1362493440 & 16 \\
\end{tabular}
\end{center}
\end{table}

\subsection{Collapsed adjacency matrices for this action}

Now that we can identify the orbital that contains any pair $(g,h)$ of $2\AAA$
involutions of $G\cong \jan{4}$, we are in a position to calculate the
collapsed adjacency matrices associated with this action
for various orbitals. The notation $G,a,b,c,d,t,t_i$ is as in previous sections.

First of all, we need to obtain $\cen_G(a)$, for which we use standard methods
\cite{jnb}. We get
$$
H=\cen_G(a) = \langle a, [a,b]^5, (ab^2)^6, babab[a,babab]^5, bab^2ab[a,bab^2ab]^5 \rangle,
$$
or, if we insist on just two generators, we can take
$$
H=\cen_G(a) = \langle [a,b]^5(ab^2)^6,  bab^2ab[a,bab^2ab]^5 ababab[a,ababab]^5 \rangle.
$$
We show that the above groups are subgroups of $\cen_G(a)$, simply by showing 
that generators of the subgroups centralise $a$.
We used {\sc Magma}~\cite{Magma} to verify that the second group above is
indeed the whole of $\cen_G(a)$, by computing its order.

The neighbourhood of $a$ in the $i$-th orbital graph is the orbit of $a^{t_i}$
under $\cen_G(a)$, which is found by closing $\{a^{t_i}$ by repeatedly
conjugating by the generators $h_1=[a,b]^5(ab^2)^6$ and
$h_2=bab^2ab[a,bab^2ab]^5 ababab[a,ababab]^5$ of $\cen_G(a)$.
Call this orbit $O_i$.

We then obtain the $i$-th collapsed adjacency matrix $A_i$ as follows.
For each value of $j$ and each element $y$ of $O_i^{t_j}$ we determine which orbital $(a,y)$ belongs to,
using the fingerprints given above. The $(j,k)$ entry of $A_i$ is then the number of $y\in O_i^{t_j}$
for which this orbital is the $k$-th. This is the number of paths
$(a,y,a^{t_j})$ of type $(O_k,O_{i^*})$ on a fixed base $(a,a^{t_j})$ of type
$O_j$, where $O_{i^*}$ is paired with $O_i$.

In fact, there are memory issues using this method to calculate all the $A_i$.
It turns out to be enough to calculate $A_2$ and $A_4$, which correspond to the two smallest
non-trivial orbitals. This computation is quite fast.
Recently we have also calculated $A_5$ by this method to provide a check
on our work, but this was not done originally.

The reason that computation of $A_2$ and $A_4$ suffice is that the
\textit{intersection algebra} generated by $A_2$ and $A_4$ has dimension $20$
and contains all
the collapsed adjacency matrices $A_i$, which occur as scalar multiples of
the natural basis elements of this algebra. In each case, the first row
and first column of the basis elements have weight $1$, and we scale them
so that the non-zero entry in the first column is $1$ (they are given so that
the non-zero entry in the first row is $1$).

The collapsed adjacency matrix corresponding to the second (and also second smallest) suborbit, of
size $1386$, is given below.
{\scriptsize
\def\arraycolsep{5pt}
$$
\begin{array}{rrrrrrrrrrrrrrrrrrrr}
0 & 1386 & 0 & 0 & 0 & 0 & 0 & 0 & 0 & 0 & 0 & 0 & 0 & 0 & 0 & 0 & 0 & 0 & 0 & 0 \\
1 & 65 & 240 & 120 & 320 & 0 & 640 & 0 & 0 & 0 & 0 & 0 & 0 & 0 & 0 & 0 & 0 & 0 & 0 & 0 \\
0 & 3 & 75 & 12 & 16 & 0 & 96 & 0 & 224 & 192 & 384 & 384 & 0 & 0 & 0 & 0 & 0 & 0 & 0 & 0 \\
0 & 9 & 72 & 57 & 0 & 0 & 288 & 576 & 0 & 0 & 0 & 384 & 0 & 0 & 0 & 0 & 0 & 0 & 0 & 0 \\
0 & 7 & 28 & 0 & 35 & 0 & 196 & 0 & 112 & 672 & 0 & 0 & 336 & 0 & 0 & 0 & 0 & 0 & 0 & 0 \\
0 & 0 & 0 & 0 & 0 & 21 & 7 & 14 & 0 & 84 & 0 & 0 & 28 & 112 & 84 & 28 & 0 & 336 & 0 & 672 \\
0 & 1 & 12 & 6 & 14 & 64 & 57 & 80 & 48 & 192 & 96 & 96 & 144 & 0 & 384 & 192 & 0 & 0 & 0 & 0 \\
0 & 0 & 0 & 3 & 0 & 32 & 20 & 83 & 0 & 0 & 0 & 48 & 48 & 192 & 576 & 0 & 0 & 384 & 0 & 0 \\
0 & 0 & 7 & 0 & 2 & 0 & 12 & 0 & 45 & 72 & 240 & 32 & 48 & 0 & 192 & 64 & 288 & 0 & 0 & 384 \\
0 & 0 & 1 & 0 & 2 & 32 & 8 & 0 & 12 & 75 & 56 & 16 & 32 & 0 & 256 & 64 & 0 & 192 & 0 & 640 \\
0 & 0 & 1 & 0 & 0 & 0 & 2 & 0 & 20 & 28 & 71 & 16 & 0 & 32 & 128 & 32 & 192 & 288 & 256 & 320 \\
0 & 0 & 6 & 1 & 0 & 0 & 12 & 24 & 16 & 48 & 96 & 63 & 0 & 48 & 192 & 208 & 0 & 288 & 0 & 384 \\
0 & 0 & 0 & 0 & 3 & 32 & 18 & 24 & 24 & 96 & 0 & 0 & 69 & 128 & 384 & 32 & 192 & 0 & 0 & 384 \\
0 & 0 & 0 & 0 & 0 & 8 & 0 & 6 & 0 & 0 & 12 & 3 & 8 & 33 & 120 & 8 & 96 & 228 & 384 & 480 \\
0 & 0 & 0 & 0 & 0 & 2 & 1 & 6 & 2 & 16 & 16 & 4 & 8 & 40 & 155 & 24 & 144 & 280 & 288 & 400 \\
0 & 0 & 0 & 0 & 0 & 4 & 3 & 0 & 4 & 24 & 24 & 26 & 4 & 16 & 144 & 81 & 48 & 288 & 192 & 528 \\
0 & 0 & 0 & 0 & 0 & 0 & 0 & 0 & 3 & 0 & 24 & 0 & 4 & 32 & 144 & 8 & 163 & 192 & 384 & 432 \\
0 & 0 & 0 & 0 & 0 & 4 & 0 & 2 & 0 & 6 & 18 & 3 & 0 & 38 & 140 & 24 & 96 & 255 & 352 & 448 \\
0 & 0 & 0 & 0 & 0 & 0 & 0 & 0 & 0 & 0 & 11 & 0 & 0 & 44 & 99 & 11 & 132 & 242 & 363 & 484 \\
0 & 0 & 0 & 0 & 0 & 4 & 0 & 0 & 1 & 10 & 10 & 2 & 2 & 40 & 100 & 22 & 108 & 224 & 352 & 511 \\
\end{array}
$$
}%
The collapsed adjacency matrix corresponding to the fourth (but third smallest) suborbit, of
size $18480$, is given below.
{\scriptsize
\def\arraycolsep{3pt}
$$
\begin{array}{rrrrrrrrrrrrrrrrrrrr}
0 & 0 & 0 & 18480 & 0 & 0 & 0 & 0 & 0 & 0 & 0 & 0 & 0 & 0 & 0 & 0 & 0 & 0 & 0 & 0 \\
0 & 120 & 960 & 760 & 0 & 0 & 3840 & 0 & 7680 & 0 & 0 & 0 & 5120 & 0 & 0 & 0 & 0 & 0 & 0 & 0 \\
0 & 12 & 196 & 32 & 96 & 0 & 576 & 0 & 672 & 768 & 1536 & 1536 & 768 & 0 & 3072 & 3072 & 0 & 6144 & 0 & 0 \\
1 & 57 & 192 & 182 & 96 & 3072 & 1056 & 768 & 768 & 0 & 0 & 1536 & 1536 & 6144 & 0 & 3072 & 0 & 0 & 0 & 0 \\
0 & 0 & 168 & 28 & 168 & 0 & 420 & 448 & 1680 & 1344 & 1344 & 0 & 2128 & 0 & 10752 & 0 & 0 & 0 & 0 & 0 \\
0 & 0 & 0 & 7 & 0 & 98 & 21 & 126 & 0 & 84 & 168 & 140 & 84 & 448 & 2436 & 756 & 672 & 3360 & 4032 & 6048 \\
0 & 6 & 72 & 22 & 30 & 192 & 270 & 384 & 336 & 864 & 672 & 576 & 528 & 1536 & 4224 & 1088 & 0 & 4608 & 0 & 3072 \\
0 & 0 & 0 & 4 & 8 & 288 & 96 & 212 & 0 & 576 & 384 & 96 & 304 & 960 & 3264 & 384 & 3072 & 2688 & 3072 & 3072 \\
0 & 3 & 21 & 4 & 30 & 0 & 84 & 0 & 210 & 672 & 480 & 192 & 144 & 128 & 1728 & 192 & 1920 & 1920 & 3072 & 7680 \\
0 & 0 & 4 & 0 & 4 & 32 & 36 & 96 & 112 & 404 & 352 & 96 & 192 & 576 & 2048 & 576 & 1408 & 3200 & 3072 & 6272 \\
0 & 0 & 4 & 0 & 2 & 32 & 14 & 32 & 40 & 176 & 420 & 128 & 32 & 320 & 1824 & 352 & 1728 & 2816 & 4096 & 6464 \\
0 & 0 & 24 & 4 & 0 & 160 & 72 & 48 & 96 & 288 & 768 & 348 & 96 & 304 & 2304 & 624 & 1920 & 4128 & 3072 & 4224 \\
0 & 1 & 12 & 4 & 19 & 96 & 66 & 152 & 72 & 576 & 192 & 96 & 202 & 384 & 2304 & 480 & 384 & 3072 & 3072 & 7296 \\
0 & 0 & 0 & 1 & 0 & 32 & 12 & 30 & 4 & 108 & 120 & 19 & 24 & 574 & 1632 & 264 & 1632 & 3276 & 4608 & 6144 \\
0 & 0 & 1 & 0 & 2 & 58 & 11 & 34 & 18 & 128 & 228 & 48 & 48 & 544 & 1744 & 256 & 1584 & 2976 & 4544 & 6256 \\
0 & 0 & 6 & 1 & 0 & 108 & 17 & 24 & 12 & 216 & 264 & 78 & 60 & 528 & 1536 & 462 & 1200 & 3456 & 4032 & 6480 \\
0 & 0 & 0 & 0 & 0 & 16 & 0 & 32 & 20 & 88 & 216 & 40 & 8 & 544 & 1584 & 200 & 1716 & 3200 & 4608 & 6208 \\
0 & 0 & 1 & 0 & 0 & 40 & 6 & 14 & 10 & 100 & 176 & 43 & 32 & 546 & 1488 & 288 & 1600 & 3208 & 4576 & 6352 \\
0 & 0 & 0 & 0 & 0 & 33 & 0 & 11 & 11 & 66 & 176 & 22 & 22 & 528 & 1562 & 231 & 1584 & 3146 & 4796 & 6292 \\
0 & 0 & 0 & 0 & 0 & 36 & 2 & 8 & 20 & 98 & 202 & 22 & 38 & 512 & 1564 & 270 & 1552 & 3176 & 4576 & 6404 \\
\end{array}
$$
}%

We have determined the collapsed adjacency matrices of all the orbitals in
this way, and checked directly that the condition on double cosets
(namely $D^{-1}\subseteq D^2$) is satisfied. This involves checking that
$(A_i)_{ii^*}\ne0$ for each $i$, where $i^*$ is the number of the orbital
paired with $O_i$.The relevant entries are
\[\begin{array}{l}
1, 65, 1456, 182, 280, 32560, 3360, 5888, 5888,\\
126352, 464672, 18816, 18816, 3246240, 29201232,\\
780816, 29096448, 116607440, 246648576, 466371136
\end{array}\]
As noted earlier, once we have all the collapsed adjacency matrices, we can
verify the simpler condition $D\subseteq D^2$ of \cite[Corollary~16]{wilcox}
(Proposition~\ref{p:w15} with $\phi$ and $\psi$ both the identity map)
by checking that $(A_i)_{ii}\ne0$ for all $i$. The relevant numbers are those
of the above list with $5888$ and $18816$ replaced by $3648$ and $14592$
respectively.

Note also that the values of the entries $s_1$ and $s_2$ in Table~\ref{MainInfo}
can also be read from the collapsed adjacency matrices.

\section{Proofs of Theorems~\ref{t:d3} and \ref{t:ad2}}

\subsection{Diagonal groups with more than two socle factors}

In this section, we recall the definition of diagonal groups, and prove
Theorem~\ref{t:d3}.

First recall the diagonal group $D(T,n)$, where $T$ is a non-abelian simple
group and $n$ an integer greater than $1$. This is a permutation group on
the set
\[\Omega=\{(t_2,\ldots,t_n):t_2,\ldots,t_n\in T\}=T^{n-1},\]
and is generated by the following permutations of $\Omega$:
\begin{itemize}\itemsep0pt
\item[(G1)] $(s_1,\ldots,s_n):(t_2,\ldots,t_n)\mapsto(s_1^{-1}t_2s_2,\ldots,
s_1^{-1}t_ns_n)$ for $(s_1,\ldots,s_n)\in T^n$ (these form a group isomorphic to $T^n$, which is the
socle of $D(T,n)$);
\item[(G2)] $\alpha:(t_2,\ldots,t_n)\mapsto(t_2^\alpha,\ldots,t_n^\alpha)$ for
$\alpha\in\Aut(T)$ (the inner automorphisms of $T$ coincide with the
permutations $(s,\ldots,s)$ of the preceding type);
\item[(G3)] $\pi\in\Sym(\{2,\ldots,n\})$ acting on the coordinates of points in
$\Omega$;
\item[(G4)]
$\tau:(t_2,\ldots,t_n)\mapsto(t_2^{-1},t_2^{-1}t_3,\ldots,t_2^{-1}t_n)$
(this corresponds to the transposition $(1,2)$ in $S_n$; together with the
preceding type it generates a group isomorphic to $S_n$).
\end{itemize}
More details, and a characterisation, for diagonal groups will be given
in~\cite{bcps}.

We define a graph $\Gamma$ on the vertex set $\Omega$ by the rule that
$(t_2,\ldots,t_n)$ is joined to $(u_2,\ldots,u_n)$  if and only if one of the
following holds:
\begin{itemize}\itemsep0pt
\item[(A1)] there exists $i\in\{2,\ldots,n\}$ such that $u_i\ne t_i$ but
$u_j=t_j$ for $j\ne i$;
\item[(A2)] there exists $x\in T$ with $x\ne1$ such that $u_i=xt_i$ for
$i=2,\ldots,n$.
\end{itemize}

Showing that $D(T,n)\le\Aut(\Gamma)$ is just a matter of checking:
\begin{itemize}
\item Consider a generator of type (G1). This obviously preserves adjacency of
type (A1). For (A2), suppose that $u_i=xt_i$ for all $i$. Applying a map of the
first kind with $s_1=1$ obviously preserves adjacency, so we can suppose that
$s_2=\cdots=s_n=1$. Then
\[s_1^{-1}u_i=(s_1^{-1}xs_1)s_1^{-1}t_i=ys_1^{-1}t_i\]
with $y=s_1^{-1}xs_1$, so the
vertices are adjacent by the (A2) rule (using $y$ in place of $x$).
\item A generator of type (G2) clearly preserves both types of adjacency rule
(with $x^\alpha$ replacing $x$ in (A2)).
\item A generator of type (G3) also preserves both
adjacency rules.
\item It remains to check $\tau$. Suppose that $(t_2,\ldots,t_n)$ is
adjacent to $(u_2,\ldots,u_n)$. Suppose that the adjacency uses rule (A1)
with $i\ne2$. Then the two vertices are mapped to 
$(t_2^{-1},t_2^{-1}t_3,\ldots,t_2^{-1}t_n)$ and
$(u_2^{-1},u_2^{-1}u_3,\ldots,u_2^{-1}u_n)$; these agree in all coordinates
except the $i$th, and so are adjacent by the rule (A1). Suppose that the adjacency uses (A1)
with $i=2$. Then $t_3=u_3,\ldots,t_n=u_n$, but $t_2\ne u_2$. If
$u_2=t_2x$ with $x\ne1$, then the images are adjacent by rule (A2)
with $x^{-1}$ replacing $x$. Finally, suppose that the adjacency uses rule
(A2), so that $u_i=xt_i$ for all $i$. Then $u_2^{-1}=t_2^{-1}x^{-1}$ but
$u_2^{-1}u_i=t_2^{-1}t_i$ for $i>2$, so the vertices are adjacent by (A1), 
with $i=2$.
\end{itemize}

The neighbourhood of a vertex in $\Gamma$ is the disjoint union of $n$
cliques; $n-1$ of these are given by adjacencies of the first type with a
fixed value of $i$, and the last one by adjacencies of the second type.
If $n>3$, there are no edges between vertices of different cliques, so
$\Gamma$ has clique number $|T|$. This is also true when $n=3$, in which case
the graph is the \emph{Latin square graph} associated with the Cayley table
of $T$.

Note in passing that if we delete rule (A2), or else delete rule (A1) for a
fixed value of $i$, we obtain a graph isomorphic to the Hamming graph
$H(n-1,|T|)$.

Note also that, for $n>2$, the automorphism group of $\Gamma$ is actually
equal to $D(T,n)$. This fact is not required for our proof; it will be
proved in the forthcoming paper~\cite{bcps}.

To prove Theorem~\ref{t:d3}, we are going to show that, for $n>2$, there is a 
proper colouring of $\Gamma$ with $|T|$ colours. It will follow that $\Gamma$
has clique number equal to chromatic number, so that its automorphism
group (and in particular, the group $D(T,n)$ and any primitive subgroup of
it) is non-synchronizing.

We split the proof into two cases according as $n$ is even or odd.

\paragraph{Case $n$ even, $n>2$.} In this case, we define a colouring of the
vertex set of $D(T,n)$, with $T$ as the set of colours, as follows:
\begin{quote}
the colour of the vertex $(t_2,\ldots,t_n)$ is
$(t_2^{-1}t_3)(t_4^{-1}t_5)\cdots (t_{n-2}^{-1}t_{n-1})t_n^{-1}$.
\end{quote}
We must check that this is a proper colouring.
\begin{itemize}\itemsep0pt
\item For adjacencies of type (A1), adjacent vertices differ in just one
coordinate, and so clearly their colours differ.
\item Suppose that $(t_2,\ldots,t_n)$ is adjacent to $(u_2,\ldots,u_n)$ by
rule (A2), so $u_i=xt_i$ for all $i$, with $x\ne1$. Let $a$ be the colour of $(t_2,\ldots,t_n)$ and let $b$ be the colour of $(u_2,\ldots,u_n)$. Then
\[\begin{array}{rcl}
b&=&(u_2^{-1}u_3)(u_4^{-1}u_5)\cdots (u_{n-2}^{-1}u_{n-1})u_n^{-1}\\
&=&((t_2^{-1}x^{-1})(xt_3))
((t_4^{-1}x^{-1})(xt_5))\cdots
((t_{n-2}^{-1}x^{-1})(xt_{n-1}))(t_n^{-1}x^{-1})\\
&=&(t_2^{-1}t_3)(t_4^{-1}t_5)\cdots (t_{n-2}^{-1}t_{n-1})t_n^{-1}x^{-1}\\
&=& ax^{-1}\ne a,
\end{array}\]
so these vertices have different colours.
\end{itemize}

\paragraph{Case $n$ odd.} This case is more complicated, and requires the
truth of the Hall--Paige conjecture (Theorem~\ref{t:hp}). We note that, by
Burnside's transfer theorem, the Sylow $2$-subgroups of a non-abelian finite
simple group cannot be cyclic; so any such group has a complete mapping.

So let $\phi:T\to T$ be a complete mapping for $T$, and let $\psi:T\to T$ be the bijection defined by
$\psi(g)=g\phi(g)$. We define a colouring of the vertex set of $D(T,n)$ for
$n$ odd as follows:
\begin{quote}
the vertex $(t_2,\ldots,t_n)$ is given the colour
$(t_2^{-1}t_3)(t_4^{-1}t_5)\cdots (t_{n-3}^{-1}t_{n-2})(t_{n-1}^{-1}\psi(t_n))$.
\end{quote}	
We check that this is a proper colouring.
\begin{itemize}\itemsep0pt
\item Suppose two vertices are adjacent by rule (A1), with $i<n$. Then
they differ in the $i$th coordinate, and so their colours differ.
\item The same holds if $i=n$, since $\psi$ is a bijection.
\item Suppose that $(u_2,\ldots,u_n)$ is adjacent to $(t_2,\ldots,t_n)$ by
rule (A2): $u_i=xt_i$ for all $i$, with $x\ne1$. Let $a$ be the colour of $(t_2,\ldots,t_n)$ and let $b$ be the colour of $(u_2,\ldots,u_n)$. Then 
\[\begin{array}{rcl}
b&=&(u_2^{-1}u_3)(u_4^{-1}u_5)\cdots (u_{n-3}^{-1}u_{n-2})(u_{n-1}^{-1}\psi(u_n))\\
&=&((t_2^{-1}x^{-1})(xt_3))
((t_4^{-1}x^{-1})(xt_5))\cdots
((t_{n-3}^{-1}x^{-1})(xt_{n-2}))(t_{n-1}^{-1}x^{-1}\psi(xt_n))\\
&=&(t_2^{-1}t_3)(t_4^{-1}t_5)\cdots (t_{n-3}^{-1}t_{n-2})(t_{n-1}^{-1}\psi(t_n))\psi(t_n)^{-1}x^{-1}\psi(xt_n)\\
&=& a\psi(t_n)^{-1}x^{-1}\psi(xt_n).
\end{array}\]
So we need to show that 
$\psi(xt_n)\ne x\psi(t_n)$. Since $\psi(g)=g\phi(g)$, we have to show that
$xt_n\phi(xt_n)\ne xt_n\phi(t_n)$, which is true since $\phi$ is a bijection
and $x\ne1$.
\end{itemize}
The theorem is proved.

\subsection{Groups with regular subgroups}

In this section, we prove Theorem~\ref{t:ad2}. The simple argument is more
general; we consider synchronization and separation for permutation groups $G$
having a regular subgroup, and show that if $G$ contains both the left and
the right actions of this subgroup then the two concepts are equivalent.
We noted after Theorem~\ref{thrm:nonsep} that separation implies
synchronization; our business here is to show the converse, for affine groups
and for diagonal groups with two factors in the socle.

Let $G$ be a permutation group of degree $n$ with a regular subgroup $H$. 
Then $G$ can be represented as a permutation group on the set $H$: we choose
a point $\alpha\in\Omega$ to correspond to the identity, and identify $\beta$
with $h$ where $\alpha h=\beta$. Then $H$ acts on itself by right 
multiplication.

Recall that sets $A$ and $B$ witness non-separation if $|A|, |B|>1$,
$|A|\cdot|B|=n$, and $|Ag\cap B|=1$ for all $g\in G$; the set $A$ and
partition $P$ witness non-synchronization if $|A|>1$ and $Ag$ is a transversal
for $P$ for all $g\in G$.

\begin{prop}\label{pr1}
Suppose that $A$ and $B$ witness non-separation. Then $H$ has an exact
factorisation by $A^{-1}=\{a^{-1}:a\in A\}$ and $B$, that is, every element
of $H$ is uniquely expressible as $a^{-1}b$ for $a\in A$ and $b\in B$.
\end{prop}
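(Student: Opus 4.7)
The plan is to reduce the exact factorisation to a counting identity and then read it off directly from the witnessing hypothesis. The setup already identifies $\Omega$ with $H$ so that the subgroup $H\le G$ acts on itself by right multiplication; in particular, for $h\in H$ and $A\subseteq\Omega=H$, the image $Ah$ computed in the permutation action coincides with the set-theoretic product $\{ah:a\in A\}$ inside the group $H$. Since $|A^{-1}|=|A|$ and $|A|\cdot|B|=n=|H|$, once I establish that every $h\in H$ admits at least one representation $h=a^{-1}b$ with $a\in A$ and $b\in B$, uniqueness follows automatically by pigeonhole: the $|A|\cdot|B|$ products $a^{-1}b$ must then enumerate $H$ without repetition.

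To produce such a representation, I would simply specialise the witnessing condition $|Ag\cap B|=1$ (which holds for all $g\in G$) to $g=h\in H\le G$. This produces a unique element $b\in B\cap Ah$, and hence a unique $a\in A$ with $ah=b$; rearranging in the group $H$ gives $h=a^{-1}b$, as required.

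The whole proof is really a one-line translation of the non-separation hypothesis once $H$ is viewed simultaneously as the domain $\Omega$ and as a regular subgroup of $G$. The only point where care is needed is the bookkeeping that makes the permutation $h\in G$ act on $a\in\Omega=H$ as right multiplication in the group, so that $Ag\cap B$ computed in $\Omega$ agrees with $\{ah:a\in A\}\cap B$ computed in $H$; after that dictionary is in place, there is no substantive obstacle, and the content of the proposition lies entirely in recognising the identification.
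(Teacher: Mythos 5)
Your argument is correct and is essentially the paper's: both hinge on specialising the witnessing condition $|Ag\cap B|=1$ to elements $g=h\in H$ acting by right multiplication, combined with the count $|A|\cdot|B|=|H|$. The only (immaterial) difference is the direction: you establish existence of a factorisation directly and get uniqueness from the counting, whereas the paper proves uniqueness directly (applying $|Ah\cap B|=1$ at a common value $h=a_1^{-1}b_1=a_2^{-1}b_2$) and gets existence from the counting.
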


\begin{proof} Since $|A^{-1}|\cdot|B|=|H|$, it is enough to show that
factorisation is unique. So suppose that $a_1^{-1}b_1=a_2^{-1}b_2$, where
$a_1,a_2\in A$ and $b_1,b_2\in B$. Then
\[\begin{array}{rcl}
b_1 &=& a_1a_2^{-1}b_2 \in A(a_2^{-1}b_2)\cap B,\\
b_2 &=& a_2a_1^{-1}b_1 \in A(a_1^{-1}b_1)\cap B,
\end{array}\]
so $b_1=b_2$ and $a_1=a_2$.
\end{proof}

\begin{prop}\label{pr2}
Suppose that $A$ and $B$ witness non-separation, and assume that $H$ has an
exact factorisation by $A$ and $B$. Then $G$ is non-synchronizing.
\end{prop}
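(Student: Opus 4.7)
The plan is to use the exact factorisation $H = A \cdot B$ to build an explicit witness $(C,P)$ for non-synchronization, taking $C = B$ and $P$ to be the partition of $\Omega = H$ whose parts are the sets $Ab$ for $b \in B$.

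First I would set $P = \{Ab : b \in B\}$. The exact factorisation hypothesis says every element of $H$ is uniquely of the form $ab$ with $a \in A$, $b \in B$, so the sets $Ab$ are pairwise disjoint with union $H$; hence $P$ is a genuine partition of $\Omega$ into $|B|$ parts of size $|A|$, non-trivial since $|A|, |B| > 1$. Next, I would take the witness subset to be $B$ itself. Since $|B|$ equals the number of parts of $P$, the requirement that $Bg$ be a transversal for $P$ is exactly that $|Bg \cap Ab| = 1$ for every $g \in G$ and every $b \in B$.

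To verify this, note that $b^{-1}\in H\le G$ and that $H$ acts by right multiplication on $\Omega = H$, so $(Ab)b^{-1} = A$; applying this bijection preserves intersection sizes, and then applying a second bijection (the permutation $(gb^{-1})^{-1} = bg^{-1}\in G$) yields
\[
|Bg \cap Ab| = |B(gb^{-1}) \cap A| = |B \cap A(bg^{-1})| = |A(bg^{-1}) \cap B|.
\]
The right-hand side equals $1$ by the non-separation hypothesis applied to the element $bg^{-1}\in G$. Hence $(B,P)$ witnesses non-synchronization and $G$ is non-synchronizing.

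The computation is short, so there is no real obstacle beyond being careful with the multiplication and action conventions. The role of the strengthened hypothesis over Proposition~\ref{pr1} (which only asserts that $A^{-1}$ and $B$ factorise $H$) is precisely to make $\{Ab : b \in B\}$ a partition of $\Omega$; with only the factorisation by $A^{-1}$, one would be forced to work with $\{A^{-1}b : b \in B\}$, and the resulting chain of equalities would involve quantities of the form $|A^{-1}g^* \cap B|$ which are not directly controlled by the non-separation hypothesis for $A$.
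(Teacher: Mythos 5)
Your proposal is correct and follows essentially the same route as the paper: the same partition $P=\{Ab:b\in B\}$ (whose partition property is exactly what the exact factorisation by $A$ and $B$ gives), the same witness set $B$, and the same translation argument reducing $|Bg\cap Ab|=1$ to the non-separation hypothesis, with your version merely spelling out the intermediate conjugating permutations more explicitly. No further comment is needed.
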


\begin{proof} We claim that $P=\{Ab:b\in B\}$ is a partition of $H$. For,
if $x\in Ab_1\cap Ab_2$, then $x=a_1b_2=a_2b_2$ for some $a_1,a_2\in A$;
since $H$ has an exact factorization by $A$ and $B$, we get $a_1=a_2$ and $b_1=b_2$.

Now for any $g\in G$, $|Ab\cap Bg|=|A\cap Bgb^{-1}|=1$ because $A$ and $B$ witness non-separation, so $P$ and $B$ witness
non-synchronization.
\end{proof}

\begin{cor}
Let $G$ be a permutation group with a regular subgroup $H$. Suppose that $G$
contains both the right and the left action of $H$. Then $G$ is synchronising
if and only if it is separating. In particular, this is true if $H$ is abelian.
\label{cor:leftright}
\end{cor}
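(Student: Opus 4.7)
The plan: since separation implies synchronisation (noted after Theorem~\ref{thrm:nonsep}), I only need to prove the converse. Let $(A,B)$ witness non-separation, with $1<|A|,|B|<|H|$ and $|A|\cdot|B|=|H|$; I will construct a non-trivial partition $P$ of $\Omega=H$ such that $(A,P)$ witnesses non-synchronisation. The idea is to exploit the two regular actions asymmetrically: the left action will produce the partition $P$, and the right action will guarantee the transversal property of $A$.

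First, since the left multiplication $\lambda_h:x\mapsto hx$ lies in $G$ for every $h\in H$, substituting $g=\lambda_h$ into the non-separation condition yields $|hA\cap B|=1$ for all $h\in H$. Equivalently, every $h\in H$ is uniquely of the form $ba^{-1}$ with $a\in A$ and $b\in B$; that is, $H=BA^{-1}$ is an exact factorisation. (This is the analogue of Proposition~\ref{pr1} for the left-regular action.) Let $P=\{Ba^{-1}:a\in A\}$, which by this exactness is a partition of $H$ into $|A|$ parts of size $|B|$, hence non-trivial. Now write $r_a:x\mapsto xa$ for the right multiplication by $a\in H\subseteq G$, so that $r_a\in G$ and hence $gr_a\in G$ for every $g\in G$. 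Applying the bijection $r_a$ sends $Ba^{-1}$ to $B$ and $Ag$ to $A(gr_a)$, so
\[|Ag\cap Ba^{-1}|=|A(gr_a)\cap B|=1\]
by non-separation of $(A,B)$ applied to $gr_a$. Hence $Ag$ is a transversal of $P$ for every $g\in G$, so $(A,P)$ witnesses non-synchronisation. The abelian case is immediate, since then the left and right regular actions of $H$ coincide.

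The main obstacle to anticipate is identifying this asymmetric usage. A naive attempt to apply Proposition~\ref{pr2} directly to $(A,B)$ would require the exact factorisation $H=AB$, which need not hold (Proposition~\ref{pr1} only yields $H=A^{-1}B$). The fix is to use the \emph{left} action in the proof of Proposition~\ref{pr1} to obtain the alternative exact factorisation $H=BA^{-1}$ instead, after which $A$ itself (shifted by any element of $G$) is a provable transversal for the associated partition via the \emph{right} action.
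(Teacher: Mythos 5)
Your proof is correct, and it takes a genuinely different route from the paper's. You work entirely with the combinatorial definitions: plugging the left translations into the non-separation condition gives $|hA\cap B|=1$ for all $h\in H$, hence the exact factorisation $H=BA^{-1}$; you then take the partition $P=\{Ba^{-1}:a\in A\}$ and verify the transversal property of $Ag$ directly by composing with the right translation $r_a\in G$. The paper instead goes through Theorem~\ref{thrm:nonsep}: non-separation yields a $G$-invariant graph $\Gamma$ in which $A$ is a clique and $B$ a coclique; since both regular actions lie in $G$, $\Gamma$ is a Cayley graph whose connection set is closed under conjugation, whence $A^{-1}$ is also a clique and $(A^{-1},B)$ witnesses non-separation; then Propositions~\ref{pr1} and~\ref{pr2} finish the argument. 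Both proofs confront the same mismatch you identify (Proposition~\ref{pr1} produces a factorisation by $A^{-1}$ and $B$, while Proposition~\ref{pr2} wants one by the clique itself and $B$), but resolve it differently: you sidestep it by extracting the factorisation $H=BA^{-1}$ from the left action and never need to know that $A^{-1}$ is a clique, which makes your argument shorter, self-contained, and independent of the graph-theoretic characterisation; the paper's route reuses its general Propositions~\ref{pr1} and~\ref{pr2} (stated for an arbitrary regular subgroup, and relevant to the problem it poses afterwards) and records the structural fact that in a Cayley graph with conjugation-closed connection set the inverse of a clique is a clique.
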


\begin{proof}
Suppose that $A$ and $B$ witness non-separation of $G$. By Theorem~\ref{thrm:nonsep}, there is
a graph $\Gamma$ with vertex set $H$ such that $G\le\Aut(\Gamma)$, and $A$ is a
clique and $B$ a coclique in $\Gamma$. Since $H$ acts regularly on the vertices of $\Gamma$, we deduce that $\Gamma$ is a Cayley graph over $H$, thus $\Gamma=\mathrm{Cay}(H,S)$ for some subset $S$ of $H$. Since $\Gamma$ admits the left and
right actions of $H$,  the connection set $S$ is closed under conjugation
in $H$. Now $A$ is a clique, so $a_1a_2^{-1}\in S$ for all $a_1,a_2\in A$; thus
also $a_2^{-1}a_1=a_2^{-1}(a_1a_2^{-1})a_2\in S$ for all $a_2,a_1\in A$, and $A^{-1}$ is also a clique.
The result now follows from Propositions~\ref{pr1} and~\ref{pr2}.
\end{proof}

Now we can deal with the remaining classes of primitive groups. Both are
immediate from Corollary~\ref{cor:leftright}. 

\paragraph{Affine groups} The socle of an affine group is the translation group
of the affine space, which is an abelian regular subgroup. The left and right
regular actions of an abelian group are the same.

\paragraph{Diagonal groups with two factors} The socle of such a group has
the form $T\times T$, where $T$ is a non-abelian simple group; it acts on $T$
by the rule $(g,h):x\mapsto g^{-1}xh$. So the first factor of $T\times T$
induces the left regular action of $T$, and the second factor the right
regular action.

\paragraph{Problem} Is it true that, for any group $G$ containing a regular
subgroup $H$, $G$ is synchronizing if and only if it is separating?

\paragraph{Problem} Is it true that every group of simple diagonal type with
two simple factors in its socle is non-synchronizing?

\paragraph{Acknowledgment} The research of Qi Cai and Hua Zhang was
supported by National Science Foundation of China grant 11561078. The
authors are grateful to the referee for a thorough and helpful report.

\end{document}